\documentclass[a4paper,10pt,twoside]{amsart}
\usepackage{amsthm}
\usepackage{amsmath}
\usepackage{amssymb}
\usepackage{amsfonts}
\usepackage{amscd}
\usepackage[english]{babel}
\usepackage{stmaryrd}
\usepackage{enumerate}

\newtheorem{thm}{Theorem}
\newtheorem{cor}{Corollary}
\newtheorem{prop}{Proposition}
\newtheorem{rem}{Remark}

\newtheorem{lem}{Lemma}

\newtheorem*{thm2}{Theorem}

\begin{document}

    \title{About the matrix function $X\rightarrow AX+XA$}
    \author{Gerald BOURGEOIS}
    
    \date{9-30-2012}
    \address{G\'erald Bourgeois, GAATI, Universit\'e de la polyn\'esie fran\c caise, BP 6570, 98702 FAA'A, Tahiti, Polyn\'esie Fran\c caise.}
    \email{bourgeois.gerald@gmail.com}
        
  \subjclass[2010]{Primary 15A15. Secondary 15A21}
    \keywords{Determinant. Similarity. Rank}

\begin{abstract}
Let $K$ be an infinite field such that $\mathrm{char}(K)\not= 2$. We show that, for every $A\in\mathcal{M}_n(K)$ such that $\mathrm{rank}(A)\geq n/2$, there exists $B\in\mathcal{M}_n(K)$ such that $B$ is similar to $A$ and $A+B$ is invertible. Let $K$ be a subfield of $\mathbb{R}$. We show that, if $n$ is even, then for every $X\in\mathcal{M}_n(K)$, $\det(AX+XA)\geq 0$ if and only if either $\mathrm{rank}(A)<n/2$ or there exists $\alpha\in K,\alpha\leq 0$, such that $A^2=\alpha I_n$.

\end{abstract}

\maketitle

    \section{Introduction}
    Let $K$ be a field and $n\in\mathbb{N}_{\geq 2}$. If $M\in\mathcal{M}_n(K)$, the $n\times n$ matrices with entries in $K$, then $\mathrm{adj}(M)$ denotes its classical adjoint, $\mathrm{tr}(M)$ denotes its trace and $\mathrm{sp}(M)$ denotes its spectrum in $\overline{K}$, an algebraic closure of $K$. Consider the function $\phi:X\in\mathcal{M}_n(K)\rightarrow \det(AX+XB)\in K$. \\
    Problem 1. Characterize the matrices $A\in\mathcal{M}_n(K)$ such that 
     $$\phi=0\text{ (resp. }\phi\geq 0,\text{ resp. }\phi\leq 0\text{ when }K=\mathbb{C}\text{ or }\mathbb{R}).$$
    Problem 2. Characterize the matrices $A\in\mathcal{M}_n(K)$ such that there exist $B\in\mathcal{M}_n(K)$ such that $B$ is similar to $A$ and $A+B$ is invertible. It is linked to Problem 1, because $\phi(X)\not=0$ and $X$ invertible imply $A+XAX^{-1}$ invertible.\\ 
       To solve Problem 1, we can consider $\mathrm{adj}(AX+XA)$. We obtain the following results\\
    $i)$ If $\mathrm{rank}(A)<n/2$ then $\phi=0$ and for every $X$, $\mathrm{adj}(AX+XA)\times A=A\times\mathrm{adj}(AX+XA)=0_n$.\\
    $ii)$ In the particular case when $K=\mathbb{C}$ or $\mathbb{R}$ : $\phi=0\;\Leftrightarrow$ for every $X\in\mathcal{M}_n(K)$, $\mathrm{adj}(AX+XA)\times A+A\times\mathrm{adj}(AX+XA)=0_n$.\\
   Problem 2 is connected to the following Roth's result, valid over any field $K$ (cf. \cite{4})
  \begin{thm2}  Let $A,A',C\in\mathcal{M}_n(K)$. The matrix equation $AX-XA'=C$ has a solution if and only if the matrices $\begin{pmatrix}A&C\\0&A'\end{pmatrix}$ and $\begin{pmatrix}A&0\\0&A'\end{pmatrix}$ are similar.
  \end{thm2}
  Then we seek the matrices $A$ such that there exists an invertible matrix $C$ such that  $\begin{pmatrix}A&C\\0&-A\end{pmatrix}$ and $\begin{pmatrix}A&0\\0&-A\end{pmatrix}$ are similar.\\
  We can obtain better characterizations when the underlying field $K$ is infinite and has a characteristic that is not $2$. Under this hypothesis, our first main result is\\
  $\bullet$ If $\mathrm{rank}(A)\geq n/2$, then there exists $B$ such that $B$ is similar to $A$ and $A+B$ is invertible. As a corollary, we show that $\phi=0\;\Leftrightarrow\;\mathrm{rank}(A)< n/2$.\\
  $\bullet$ When $K$ is a subfield of $\mathbb{R}$ and $n$ is even, we show our second main result 
   $$\phi\geq 0\; \Leftrightarrow\text{ either }\mathrm{rank}(A)<n/2\text{ or there exists }\alpha\in K,\alpha\leq 0,\text{ such that }A^2=\alpha I_n.$$
   As a corollary (valid for any $n$), we show that $\phi\leq 0\;\Leftrightarrow\;\mathrm{rank}(A)<n/2$.
  
  \section{About $\det(AX+XA)$}
  First, we show the two results concerning the matrix $\mathrm{adj}(AX+XA)$.
   \begin{lem}  \label{odd}
 Let $A\in\mathcal{M}_n(K)$. If $\mathrm{rank}(A)<n/2$, then, for every $X\in\mathcal{M}_n(K)$, $\det(AX+XA)=0$ and $\mathrm{adj}(AX+XA)\times A=A\times\mathrm{adj}(AX+XA)=0_n$. 
 \end{lem}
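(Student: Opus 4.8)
The plan is to exploit a rank factorization of $A$ in order to control simultaneously the rank and the kernel of $M:=AX+XA$. Set $r=\mathrm{rank}(A)$ and fix $n\times r$ matrices $U,V$ of full column rank $r$ with $A=UV^{T}$; since $U$ and $V$ are then injective, $\ker A=\ker V^{T}$. Substituting,
$$M=U(V^{T}X)+(XU)V^{T}=\begin{pmatrix}U&XU\end{pmatrix}\begin{pmatrix}V^{T}X\\V^{T}\end{pmatrix}$$
exhibits $M$ as a product of an $n\times 2r$ matrix by a $2r\times n$ matrix, whence $\mathrm{rank}(M)\le 2r\le n-1<n$ and $\det(M)=0$.

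For the adjoint I would split on $\mathrm{rank}(M)$. If $\mathrm{rank}(M)\le n-2$, then every $(n-1)\times(n-1)$ minor of $M$ is zero, so $\mathrm{adj}(M)=0_{n}$ and both identities hold trivially. Otherwise $\mathrm{rank}(M)=n-1$; since $\mathrm{rank}(M)\le 2r\le n-1$ this forces $2r=n-1$ (so $n$ is odd), and because the rank of a product is at most the rank of either factor, it also forces both factors in the displayed factorization to have rank exactly $2r$.

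The crux is then a kernel inclusion. As $\begin{pmatrix}U&XU\end{pmatrix}$ has rank $2r$, its $2r$ columns are independent, so it is injective; hence $Mu=0$ implies $\begin{pmatrix}V^{T}X\\V^{T}\end{pmatrix}u=0$, and in particular $V^{T}u=0$, i.e. $Au=0$. Thus $\ker M\subseteq\ker A$. Reading $\mathrm{im}\big(\mathrm{adj}(M)\big)\subseteq\ker M$ off the relation $M\,\mathrm{adj}(M)=\det(M)I_{n}=0_{n}$, I conclude $A\,\mathrm{adj}(M)=0_{n}$. The companion identity then costs nothing: applying what was just proved to $A^{T}$ and $X^{T}$, whose rank is unchanged and which satisfy $A^{T}X^{T}+X^{T}A^{T}=M^{T}$, gives $A^{T}\,\mathrm{adj}(M^{T})=0_{n}$, and since $\mathrm{adj}(M^{T})=\mathrm{adj}(M)^{T}$ a transposition yields $\mathrm{adj}(M)\,A=0_{n}$.

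I expect the main obstacle to be precisely the boundary case $2r=n-1$: there the crude rank estimate proves only that $M$ is singular and says nothing about $\mathrm{adj}(M)$, which is genuinely nonzero, so one must extract the structural fact that full-rankness of both factors forces $\ker M\subseteq\ker A$. Note that the whole argument is pure linear algebra and needs no assumption on $K$ beyond being a field.
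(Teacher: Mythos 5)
Your proof is correct, and it reaches the crux by a genuinely different route than the paper. The skeleton is shared: both you and the paper observe $\mathrm{rank}(AX+XA)\le 2\,\mathrm{rank}(A)\le n-1$ to kill the determinant, dispose of the case $\mathrm{rank}(M)\le n-2$ where $\mathrm{adj}(M)=0_n$, and note that the remaining case forces $n$ odd with $\mathrm{rank}(A)=(n-1)/2$. The difference is how each argument gets control of $\mathrm{adj}(M)$ in that boundary case. The paper runs a pigeonhole argument: either $X$ annihilates a nonzero vector of $\ker A$, or $X(\ker A)$ has dimension $(n+1)/2$ and must meet $\ker A$ nontrivially; either way it produces $u\in\ker A\setminus\{0\}$ with $Mu=0$, and likewise $v\in\ker A^{T}\setminus\{0\}$ with $M^{T}v=0$, and then invokes the rank-one representation $\mathrm{adj}(M)=\lambda uv^{T}$ of the adjugate of a corank-one matrix (citing Magnus--Neudecker) to annihilate both products at once. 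You instead extract the stronger inclusion $\ker M\subseteq\ker A$ structurally: the factorization $M=\begin{pmatrix}U&XU\end{pmatrix}\begin{pmatrix}V^{T}X\\V^{T}\end{pmatrix}$ through $K^{2r}$ simultaneously gives the rank bound and, when $\mathrm{rank}(M)=2r=n-1$, forces the left factor to be injective, whence $Mu=0\Rightarrow V^{T}u=0\Rightarrow Au=0$; you then conclude using only the identity $M\,\mathrm{adj}(M)=\det(M)I_{n}=0_{n}$ together with the transpose symmetry $\mathrm{adj}(M^{T})=\mathrm{adj}(M)^{T}$. What your approach buys is self-containment: no dimension count and no appeal to the structure theorem for adjugates of rank-$(n-1)$ matrices, at the small cost of fixing a rank factorization $A=UV^{T}$ at the outset; what the paper's buys is that it works directly with $A$ and $X$, with the external citation absorbing the structural step. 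Both arguments are pure linear algebra valid over an arbitrary field, exactly as the lemma requires.
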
 
 \begin{proof}
 $\bullet$ Since $\mathrm{rank}(A)\leq\dfrac{n-1}{2}$, one has $\mathrm{rank}(AX+XA)\leq n-1$. Thus $\det(AX+XA)=0$.\\
 $\bullet$ If $\mathrm{rank}(AX+XA)<n-1$ then $\mathrm{adj}(AX+XA)=0$. Thus assume that $\mathrm{rank}(AX+XA)=n-1$. Then $n$ is odd, $r=\mathrm{rank}(A)=\dfrac{n-1}{2}$ and $\mathrm{rank}(\mathrm{adj}(AX+XA))=1$. 
  Assume that, for every $u\in\ker(A)\setminus\{0_{n,1}\}$, $Xu\not= 0_{n,1}$. Then $X(\ker(A))$ is a vector subspace of $K^n$ of dimension $\dfrac{n+1}{2}$ and therefore intersects $\ker(A)\setminus\{0_{n,1}\}$. Finally, there exists $u\in\ker(A)\setminus\{0_{n,1}\}$ such that $Xu\in\ker(A)$ and thus $(AX+XA)u=0$. Again the vector subspace $\ker(A^T)$ has dimension $\dfrac{n+1}{2}$. In the same way, we find $v\in\ker(A^T)\setminus\{0_{n,1}\}$ such that $X^Tv\in\ker(A^T)$. Then $(AX+XA)^Tv=0$. Therefore (cf. \cite{1}, p. 41), there exists $\lambda\in K^*$ such that $\mathrm{adj}(AX+XA)=\lambda uv^T$. Clearly $A\times \mathrm{adj}(AX+XA)=\mathrm{adj}(AX+XA)\times A=0_n$. 
 \end{proof}
 \begin{lem} Assume that $K=\mathbb{C}$ or $\mathbb{R}$.
 Let $A\in\mathcal{M}_n(K)$  Then the following conditions are equivalent.\\
 $i)$ For every $X\in\mathcal{M}_n(K)$, $\det(AX+XA)=0$.\\
 $ii)$ For every $X\in\mathcal{M}_n(K)$, $\mathrm{adj}(AX+XA)\times A+A\times\mathrm{adj}(AX+XA)=0_n$.
  \end{lem}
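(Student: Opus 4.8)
The plan is to compute the directional derivative of the polynomial map $\phi:X\mapsto\det(AX+XA)$ and to recognize the expression appearing in $ii)$ as exactly this derivative. Write $M=M(X)=AX+XA$. By the Jacobi (cofactor) formula, valid for every matrix as a polynomial identity, the derivative of $\phi$ at $X$ in the direction $Y$ is
$$D\phi(X)(Y)=\mathrm{tr}\bigl(\mathrm{adj}(M)(AY+YA)\bigr).$$
Using linearity of the trace together with $\mathrm{tr}(PQ)=\mathrm{tr}(QP)$, I would rewrite this as
$$D\phi(X)(Y)=\mathrm{tr}(\mathrm{adj}(M)AY)+\mathrm{tr}(A\,\mathrm{adj}(M)Y)=\mathrm{tr}\bigl((\mathrm{adj}(M)A+A\,\mathrm{adj}(M))\,Y\bigr).$$
This single identity, which holds as a polynomial identity in the entries of $X$ and $Y$, is the hinge of both implications.

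For $i)\Rightarrow ii)$: if $\phi\equiv 0$ then all its directional derivatives vanish, so $\mathrm{tr}((\mathrm{adj}(M)A+A\,\mathrm{adj}(M))Y)=0$ for every $X$ and every $Y$. Fixing $X$ and letting $Y$ range over $\mathcal{M}_n(K)$, the nondegeneracy of the trace bilinear form $(P,Y)\mapsto\mathrm{tr}(PY)$ forces $\mathrm{adj}(M)A+A\,\mathrm{adj}(M)=0_n$, which is exactly $ii)$.

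For $ii)\Rightarrow i)$: if $ii)$ holds then the right-hand side above is identically zero, so every partial derivative $\partial\phi/\partial X_{ij}=D\phi(X)(E_{ij})$ vanishes identically, where $E_{ij}$ is the matrix unit. Since $K=\mathbb{R}$ or $\mathbb{C}$ has characteristic zero, a polynomial all of whose partial derivatives vanish is constant; hence $\phi$ is constant, and evaluating at $X=0$ gives $\phi\equiv\det(0_n)=0$, which is $i)$. Equivalently, one may invoke the connectedness of $\mathcal{M}_n(K)$ and the smoothness of $\phi$.

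The only step requiring care is the derivative formula for the determinant: I would establish $D\phi(X)(Y)=\mathrm{tr}(\mathrm{adj}(M)(AY+YA))$ either by differentiating $\det(M+tN)=\det(M)+t\,\mathrm{tr}(\mathrm{adj}(M)N)+O(t^2)$ with $N=AY+YA$, or, to stay purely algebraic, by treating all quantities as polynomials in the indeterminate entries, where the cofactor expansion yields the formula as an identity over any field of characteristic zero. Everything else reduces to the cyclic invariance of the trace and the nondegeneracy of the trace pairing.
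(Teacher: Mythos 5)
Your proposal is correct and follows essentially the same route as the paper: both compute $D\phi(X)(Y)=\mathrm{tr}(\mathrm{adj}(AX+XA)(AY+YA))$ via the Jacobi formula, regroup by cyclic invariance of the trace into $\mathrm{tr}((\mathrm{adj}(M)A+A\,\mathrm{adj}(M))Y)$, and conclude using nondegeneracy of the trace pairing together with the fact that a polynomial map with identically vanishing derivative is constant (hence zero, since $\phi(0)=0$). Your write-up merely makes explicit the details the paper compresses into a chain of equivalences.
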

 \begin{proof} Let $D\phi$ be the derivative of $\phi$. Then
  $$ \phi=0\;\Leftrightarrow \text{ for every }X\in\mathcal{M}_n(K),\;\; D\phi(X)=0$$ 
 $$\Leftrightarrow \text{ for every }X,H\in\mathcal{M}_n(K),\;\;\mathrm{tr}(\mathrm{adj}(AX+XA)(AH+HA))=0$$
  $$\Leftrightarrow \text{ for every }X,H\in\mathcal{M}_n(K),\;\;\mathrm{tr}((A\times \mathrm{adj}(AX+XA)+\mathrm{adj}(AX+XA)\times A)H)=0$$
    $$\Leftrightarrow \;ii).$$
     \end{proof}
      Let $\mathrm{char}(K)$ be the characteristic of $K$. We show our first main result.
   \begin{thm}  \label{florian}
  Let $K$ be an infinite field such that $\mathrm{char}(K)\not=2$. Let $A\in\mathcal{M}_n(K)$ be such that $\mathrm{rank}(A)\geq n/2$. Then there exists $B\in\mathcal{M}_n(K)$ such that $B$ is similar to $A$ and $A+B$ is invertible.  
  \end{thm}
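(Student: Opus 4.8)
The plan is to reduce the statement to a single non-vanishing assertion and then to a purely nilpotent rank estimate. First I would use the connection noted in the introduction: if $X\in\mathcal{M}_n(K)$ is invertible and $AX+XA$ is invertible, then $B:=XAX^{-1}$ is similar to $A$ and $A+B=(AX+XA)X^{-1}$ is invertible, so it suffices to produce an invertible $X$ with $\det(AX+XA)\neq 0$. Since $K$ is infinite, I would obtain this by a genericity argument: the map $X\mapsto\det(AX+XA)$ is a polynomial in the entries of $X$ with coefficients in $K$, so it is enough to show it is not identically zero; then its product with $\det(X)$ is a nonzero polynomial, and over an infinite field there is a point $X_0\in\mathcal{M}_n(K)$ where both factors are nonzero, giving $X_0$ invertible with $AX_0+X_0A$ invertible. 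Whether $\det(AX+XA)\equiv 0$ is moreover unchanged by passing to $\overline{K}$ and by replacing $A$ with any conjugate $QAQ^{-1}$ (substitute $X=QYQ^{-1}$), so I may assume $A$ is in Jordan form over $\overline{K}$ and only need to exhibit one $Y$ with $AY+YA$ invertible.

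Next I would use the Fitting decomposition over $\overline{K}$: write $A=\mathrm{diag}(N,G)$ with $N$ nilpotent of size $m=\dim\ker(A^n)$ and $G$ invertible of size $n-m$. Splitting $X$ into the corresponding $2\times 2$ block form and computing $AX+XA$ block by block, the diagonal operator $X_4\mapsto GX_4+X_4G$ is made invertible by $X_4=I_{n-m}$ (yielding $2G$, invertible since $\mathrm{char}(K)\neq 2$), while the off-diagonal Sylvester operators $X_2\mapsto NX_2+X_2G$ and $X_3\mapsto GX_3+X_3N$ are bijective (their eigenvalues are the nonzero eigenvalues of $G$). Hence the off-diagonal blocks of $AX+XA$ can be prescribed arbitrarily, and a Schur-complement computation reduces invertibility of $AX+XA$ to invertibility of $NX_1+X_1N-W$, where $W$ ranges over all $m\times m$ matrices of rank at most $n-m$. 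A short rank argument (putting $NX_1+X_1N$ in rank normal form and choosing $W$ to fill the deficiency) shows such a $W$ exists exactly when $X_1$ can be chosen with $\mathrm{rank}(NX_1+X_1N)\geq m-(n-m)=2m-n$.

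This turns the theorem into the following estimate, which I expect to be the main obstacle: for a nilpotent $N$ of size $m$ one can choose $X_1$ with $\mathrm{rank}(NX_1+X_1N)\geq\min(m,2\,\mathrm{rank}(N))$. Granting it, the hypothesis $\mathrm{rank}(A)\geq n/2$ (equivalently $\dim\ker A=m-\mathrm{rank}(N)\leq n/2$) gives $2\,\mathrm{rank}(N)\geq 2m-n$ and $m\geq 2m-n$, so the bound $2m-n$ is met in every case and the reduction is complete. For the estimate itself I would work over $\overline{K}$ in a Jordan basis of $N$: the choice $X_1=N^T$ already produces $NN^T+N^TN=\mathrm{diag}(1,2,\dots,2,1)$ on each Jordan block of size $\geq 2$ (invertible since $\mathrm{char}(K)\neq 2$) and $0$ on each block of size $1$, hence rank $\sum_{t_i\geq 2}t_i$. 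The genuinely delicate point is to absorb the size-$1$ blocks by adding off-diagonal terms to $X_1$ that couple each such block to the top of a longer chain, driving $\dim\ker(NX_1+X_1N)$ down to $\max(0,2k-m)$ where $k=\dim\ker N$; I would carry this out by an explicit block-coupling construction, checking in each coupling that a fresh basis vector enters the image without collapsing one already present.
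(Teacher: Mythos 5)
Your reduction machinery is correct, and it genuinely departs from the paper's proof. The polynomial argument (the product $\det(AX+XA)\det(X)$ is a polynomial with coefficients in $K$; it is nonzero as a polynomial if and only if it fails to vanish at some point of $\overline{K}^{n^2}$, hence, $K$ being infinite, at some point of $K^{n^2}$) replaces the paper's Case 2, which invokes Zariski density of $GL_n(K)$ in $GL_n(\overline{K})$ via Borel's book; your route is more elementary. Likewise, the Fitting decomposition plus Schur complement cleanly replaces the paper's Steps 1 and 4: the off-diagonal Sylvester operators $X_2\mapsto NX_2+X_2G$ and $X_3\mapsto GX_3+X_3N$ are indeed bijective, the products $B(2G)^{-1}C$ sweep out exactly the $m\times m$ matrices of rank at most $n-m$, and the rank-normal-form argument correctly turns the theorem into: find $X_1$ with $\mathrm{rank}(NX_1+X_1N)\geq 2m-n$. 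Your arithmetic showing that $\mathrm{rank}(A)\geq n/2$ gives $\min(m,2\,\mathrm{rank}(N))\geq 2m-n$ is also right.

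The gap is the final estimate, and it is not a routine verification: the claim that for nilpotent $N$ one can achieve $\mathrm{rank}(NX_1+X_1N)\geq\min(m,2\,\mathrm{rank}(N))$ is, in its critical case $\mathrm{rank}(N)\geq m/2$, precisely the nilpotent case of the theorem you are proving, so your reduction isolates the difficulty rather than resolving it. The choice $X_1=N^T$ disposes only of the situation with no Jordan blocks of size $1$ (the analogue of the paper's Step 2); all the hard content lives in the size-$1$ blocks, which is exactly where the paper does its real work (Step 3: explicit permutation matrices for $\mathrm{diag}(0_p,J_q)$ with $p\leq q-2$, and determinant evaluations $\pm2^{(q-p-2)/2}$, $\pm2^{(q-p-3)/2}$). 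Your text replaces this with a statement of intent ("an explicit block-coupling construction, checking in each coupling that a fresh basis vector enters the image"), which is not a proof. For what it is worth, the lemma is true and your coupling idea can be completed: keep $N^T$ on the chains, and for each singleton basis vector $e_s$ assigned to a chain of length $q$ choose a distinct interior index $v$ with $2\leq v\leq q-1$ and add $e_{v+1}e_s^T+e_se_{v-1}^T$ to $X_1$; then $NX_1+X_1N$ equals $\mathrm{diag}(1,2,\dots,2,1)$ on the chain plus the symmetric pair of entries $e_ve_s^T+e_se_v^T$, and the Schur complement of the invertible diagonal block is diagonal with entries $-1/2$, so the determinant is nonzero (here $\mathrm{char}(K)\neq 2$ enters twice). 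The constraint that emerges, at most $q-2$ singletons per chain of length $q$, is exactly the paper's condition $p\leq q-2$, and one still needs the combinatorial step that $\mathrm{rank}(N)\geq m/2$ (equivalently, the number of singletons is at most $\sum_i(t_i-2)$ over the chains) allows such a distribution. None of this is in your write-up; until it is, the proof is incomplete at its central point.
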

  \begin{proof} 
  $\bullet$ Case 1. $K$ is algebraically closed. \\
    Step 1. $\mathrm{rank}(A)\geq n/2$ and $\mathrm{ker}(A)=\mathrm{ker}(A^2)$. Then we may assume that $A$ is in a Jordan form
   $$A=\mathrm{diag}(\lambda_1I_{n_1}+N_1,\cdots,\lambda_pI_{n_p}+N_p,0_q)$$
   where $n_1\leq\cdots\leq n_p$, $q\leq{n/2}$, for every $i$, $\lambda_i\not= 0$ and $N_i$ is strictly upper triangular. If $n_1\leq q$, then we may remove, from the Jordan form of $A$, the block $A'=\mathrm{diag}(\lambda_1I_{n_1}+N_1,0_{n_1})$. Indeed $B'=\mathrm{diag}(0_{n_1},\lambda_1I_{n_1}+N_1)$ is similar to $A'$ and $A'+B'$ is invertible. Thus we may assume that $n_1>q$. If $M$ is invertible, then, since $\mathrm{char}(K)\not=2$, $M+M$ is invertible. Therefore we may assume that $A=\mathrm{diag}(\lambda_1I_{n_1}+N_1,0_q)$. The matrix $B=\mathrm{diag}(0_q,\lambda_1I_{n_1}+N_1)$ is similar to $A$ and, since $\mathrm{char}(K)\not= 2$, $A+B$ is invertible.\\
   Step 2. $A$ is nilpotent and its Jordan forms have no nilpotent Jordan blocks of dimension $1$. It is sufficient to show the result when $A=J_n$, the nilpotent Jordan block of dimension $n\geq 2$. Let $P$ be the matrix associated to the permutation $\{1\rightarrow 2,\cdots,n-1\rightarrow n,n\rightarrow 1\}$. Then $\det(AP+PA)=2^{n-2}$ is non-zero because $\mathrm{char}(K)\not=2$. Finally $B=PAP^{-1}=PAP^T$ works.\\
   Step 3. $A$ is nilpotent and $\mathrm{rank}(A)\geq n/2$. It is sufficient to show the result when $A=\mathrm{diag}(0_p,J_q)$ with $q\geq 3$ and $p\leq q-2$.\\
   $i)$ $n=p+q$ is even. Consider the matrix $P$ associated to the permutation $\{1\rightarrow n-1,\cdots,n/2-1\rightarrow n/2+1,n/2\rightarrow n,n/2+1\rightarrow 1,\cdots,n\rightarrow n/2\}$. Then $\det(AP+PA)=\pm 2^{\frac{q-p-2}{2}}$ is non-zero and $B=PAP^T$ works.\\
   $ii)$ $n$ is odd. Then $q-p\geq 3$. Consider the matrix $P$ associated to the permutation $\{1\rightarrow n-1,\cdots,\dfrac{n-1}{2}\rightarrow \dfrac{n+1}{2},\dfrac{n+1}{2}\rightarrow n,\dfrac{n+3}{2}\rightarrow 1,\cdots,n\rightarrow\dfrac{n-1}{2}\}$. Then $\det(AP+PA)=\pm 2^{\frac{q-p-3}{2}}$ is non-zero and $B=PAP^T$ works.\\   
     Step 4. $\mathrm{rank}(A)\geq n/2$. We may assume $A=\mathrm{diag}(U,V)$ where $U$ satisfies the condition of Step 1 and $V$ satisfies the conditions of Step 3. We easily conclude.\\
   $\bullet$ Case 2. (due to Florian Eisele). $K$ is not algebraically closed. Let $\overline{K}$ be an algebraic closure of $K$. We consider the rational functions 
  $$f:T=[T_{i,j}]\in GL_n(K)\rightarrow \det(A+TAT^{-1})\in K(T_{i,j}),$$
  $$\overline{f}:T=[T_{i,j}]\in GL_n(\overline{K})\rightarrow \det(A+TAT^{-1})\in \overline{K}(T_{i,j}).$$
  By Case 1, $\overline{f}$ is not the zero function. Moreover, $K$ is infinite, $GL_n(\overline{K})$ is reductive and connected. 
  That $GL_n(\overline{K})$ is connected follows by identifying it with a closed subvariety of $K^{n^2+1}$. Indeed, it can be seen as the one defined by the vanishing of the ideal generated by $\det(T)-1$ where $T$ is the ``extra'' variable in the polynomial ring. That this is a prime ideal just means that it is an irreducible polynomial, which is clear.   
  Thus, by  \cite{2}, Corollary  18.3, $GL_n(K)$ is Zariski-dense in $GL_n(\overline{K})$. So $f$ is not the zero function and we are done.
  \end{proof}
      \begin{rem}
 Assume $K$ is a subfield of $\mathbb{C}$. The linear function $\psi:X\rightarrow AX+XA$ can be written $\psi=A\otimes I_n+I_n\otimes A^T$, that is a sum of two linear functions that commute. If $\mathrm{sp}(A)=(\lambda_i)_{i\leq n}$, then $\mathrm{sp}(\psi)=(\lambda_i+\lambda_j)_{i,j\leq n}$. If at least $n^2-n+1$ among these eigenvalues are non-zero, then $\mathrm{rank}(\psi)\geq n^2-n+1$ and there exists $X$ such that $AX+XA$ is invertible (cf. \cite {3}). By a reasoning using density, we conclude that $X$ may be assumed invertible and $A+XAX^{-1}$ is invertible. This result is weaker than Theorem \ref{florian}. Indeed, if $A$ is a generic matrix satisfying $\mathrm{rank}(A)\approx n/2$, then $\mathrm{rank}(\psi)\approx n^2-(\dfrac{n}{2})^2=\dfrac{3n^2}{4}$.      
     \end{rem} 
   \begin{rem}
$\bullet$ We conjecture that Theorem \ref{florian} is true even if $K$ is a finite field. In particular, if $n\leq 3$, then we can easily show that it works for any finite field $K$.\\
 $\bullet$ In Theoerem \ref{florian}, we must assume that $\mathrm{char}(K)\not= 2$. Else, consider the matrix $A=\begin{pmatrix}I_p&0_{p,n-p}\\0_{n-p,p}&0_{n-p}\end{pmatrix}$ where $p>\dfrac{n}{2}$. If $X=\begin{pmatrix}P&Q\\R&S\end{pmatrix}$, then $AX+XA=\begin{pmatrix}0&Q\\R&0\end{pmatrix}$ cannot be invertible because $\mathrm{rank}(R)<n/2,\mathrm{rank}(Q)<n/2$.
  \end{rem}
  \begin{cor}  \label{detnul}
  Let $K$ be an infinite field such that $\mathrm{char}(K)\not= 2$. Let $A\in\mathcal{M}_n(K)$. The following conditions are equivalent\\
  $i)$ $\mathrm{rank}(A)<n/2$.\\
  $ii)$ For every $X\in\mathcal{M}_n(K)$, $\det(AX+XA)=0$.  
  \end{cor}
  \begin{proof}
  According to Lemma \ref{odd}, $i)$ implies $ii)$.\\
 Now, assume that $r=\mathrm{rank}(A)\geq\dfrac{n}{2}$. If $X\in GL_n(K)$, then $AX+XA\in GL_n(K)$ is equivalent to $A+XAX^{-1}\in GL_n(K)$. According to Theorem \ref{florian}, such a matrix $X$ exists and, consequently, $ii)$ implies $i)$.
   \end{proof}
   The next two results are valid over any field $K$.
   \begin{prop}   \label{even}
 Let $n$ be an even natural integer and $A\in\mathcal{M}_n(K)$. If $A^2=0_n$, then, for every $X\in\mathcal{M}_n(K)$, $\det(AX+XA)$ is a square in $K$. 
 \end{prop}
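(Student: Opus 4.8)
The plan is to reduce $A$ to a single simple canonical form and then compute the determinant directly. First I would observe that $A^2=0_n$ forces $\mathrm{im}(A)\subseteq\ker(A)$, hence $\mathrm{rank}(A)\leq n/2$. If $\mathrm{rank}(A)<n/2$, then Lemma \ref{odd} already gives $\det(AX+XA)=0$ for every $X$, and $0=0^2$ is a square in $K$, so this case is immediate. The remaining case is $\mathrm{rank}(A)=m$, where I set $m=n/2$.

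For the reduction I would exploit a similarity invariance. If $A=PA'P^{-1}$ with $P\in GL_n(K)$ and we put $Y=P^{-1}XP$, then $P^{-1}(AX+XA)P=A'Y+YA'$, so $\det(AX+XA)=\det(A'Y+YA')$. Since $X\mapsto P^{-1}XP$ is a bijection of $\mathcal{M}_n(K)$, it suffices to prove the statement for one matrix $A'$ similar to $A$, with $Y$ again ranging over all of $\mathcal{M}_n(K)$. Because $A^2=0_n$ and $\mathrm{rank}(A)=m$, the elementary divisors of $A$ are exactly $m$ copies of $x^2$ and no factor $x$, so over any field $A$ is similar to the block matrix $A'=\begin{pmatrix}0_m&I_m\\0_m&0_m\end{pmatrix}$.

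The final step is a direct block computation. Writing $X$ in $m\times m$ blocks as $X=\begin{pmatrix}P&Q\\R&S\end{pmatrix}$, one finds $A'X=\begin{pmatrix}R&S\\0_m&0_m\end{pmatrix}$ and $XA'=\begin{pmatrix}0_m&P\\0_m&R\end{pmatrix}$, so $A'X+XA'=\begin{pmatrix}R&P+S\\0_m&R\end{pmatrix}$. This is block upper triangular with both diagonal blocks equal to $R$, whence $\det(A'X+XA')=\det(R)^2$, which is a square in $K$. Combined with the invariance of the previous paragraph, this gives the claim.

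The only point requiring care is the reduction to the canonical form $A'$, which I expect to be the main (though minor) obstacle. It rests on the field-independent classification of matrices with $A^2=0_n$ by their rank, together with the verification that the similarity invariance genuinely allows replacing $A$ by $A'$ while $X$ still ranges over all of $\mathcal{M}_n(K)$. Once that is settled, the block computation is immediate and the square is exhibited explicitly as $\det(R)^2$.
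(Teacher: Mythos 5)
Your proof is correct, but it takes a genuinely different (and more elementary) route than the paper's. The paper works with the interleaved canonical form $A=\mathrm{diag}(U_1,\dots,U_p,0_{n-2p})$, $U_i=\begin{pmatrix}0&1\\0&0\end{pmatrix}$, partitions $X$ into $2\times 2$ blocks, observes that every block of $AX+XA$ has the form $x_{2i,2j-1}\begin{pmatrix}1&*\\0&1\end{pmatrix}$ so that all blocks pairwise commute, and then invokes the nontrivial theorem on determinants of block matrices with pairwise commuting blocks to obtain $\det(AX+XA)=\det^2(\widehat X)$, where $\widehat X$ is the submatrix of $X$ on even-index rows and odd-index columns. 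You instead conjugate to the ``stacked'' form $\begin{pmatrix}0_m&I_m\\0_m&0_m\end{pmatrix}$, under which $AX+XA=\begin{pmatrix}R&P+S\\0_m&R\end{pmatrix}$ is literally block upper triangular, and the square $\det(R)^2$ drops out of the elementary block-triangular determinant formula; no commuting-blocks theorem is needed. (Up to the permutation relating the two canonical forms, your $R$ is exactly the paper's $\widehat X$.) Your supporting steps are all sound: the similarity invariance with $Y=P^{-1}XP$ ranging over all of $\mathcal{M}_n(K)$, the field-independent classification of square-zero matrices of rank $n/2$ as direct sums of $J_2$ blocks, and the dispatch of the case $\mathrm{rank}(A)<n/2$ via Lemma \ref{odd}. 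What the paper's heavier machinery buys is uniformity: the same commuting-blocks argument is reused verbatim in Proposition \ref{twosquares}, where $A^2=\alpha I_n$ with $\alpha$ a non-square; there the blocks $a_{i,j}I_2+b_{i,j}U$ admit no triangular structure, so your shortcut would not transfer, whereas the paper's method handles both propositions by one computation.
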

 \begin{proof}
 We may assume $A=\mathrm{diag}(U_1,\cdots,U_p,0_{n-2p})$ where, for every $i$, $U_i=\begin{pmatrix}0&1\\0&0\end{pmatrix}$. If $2p\leq{n-2}$, then $p=\mathrm{rank}(A)\leq\dfrac{n-2}{2}$, $\mathrm{rank}(AX+XA)\leq n-2$ and $\mathrm{adj}(AX+XA)=0_n$. Therefore we assume
  $$\mathrm{rank}(A)=\dfrac{n}{2}\text{ and }\mathrm{im} (A)=\ker (A).$$
  Put $X=[X_{i,j}]$ where, for every $i,j\leq{n/2}$, $X_{i,j}\in\mathcal{M}_2(K)$. Note that, if $x_{k,l}\in K$ is the $(k,l)$ entry of $X$, then $X_{i,j}=\begin{pmatrix}x_{2i-1,2j-1}&x_{2i-1,2j}\\x_{2i,2j-1}&x_{2i,2j}\end{pmatrix}$ and $$(AX+XA)_{i,j}=U_iX_{i,j}+X_{i,j}U_j=x_{2i,2j-1}\begin{pmatrix}1&*\\0&1\end{pmatrix}.$$
  Thus the matrices $(AX+XA)_{i,j}$ pairwise commute and $\det(AX+XA)$ depends only on the rows of even index and the columns of odd index of the matrix $X$. Let $S_q$ be the set of all permutations of $\{1,\dots,q\}$ and if $\sigma\in S_q$, then $\epsilon(\sigma)$ denotes its signature. Therefore 
  $$\det(AX+XA)=\det(\sum_{\sigma\in S_{n/2}}\epsilon(\sigma)X_{\sigma(1),1}\cdots X_{\sigma(n/2),n/2})=\det(\det(\widehat{X})\begin{pmatrix}1&*\\0&1\end{pmatrix}),$$ 
   where $\widehat{X}\in\mathcal{M}_{n/2}(K)$ is the submatrix of $X$ that is constituted by the rows of even index and the columns of odd index. Finally $\det(AX+XA)=\det^2(\widehat{X})$.
 \end{proof}
\begin{prop}   \label{twosquares}
 Let $n$ be an even integer and $\alpha\in K$ be not a square. If $A\in\mathcal{M}_n(K)$ satisfies $A^2=\alpha I_n$, then, for every $X\in\mathcal{M}_n(K)$, $\det(AX+XA)$ is in the form $r^2-\alpha s^2$, where $r,s\in K$.
 \end{prop}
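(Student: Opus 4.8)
The plan is to realize $\det(AX+XA)$ as a value of the norm form of the quadratic extension $L=K(\beta)$, where $\beta$ denotes a root of $x^2-\alpha$. Since $\alpha$ is not a square, $x^2-\alpha$ is irreducible over $K$, so $[L:K]=2$ and the norm is $N_{L/K}(r+s\beta)=r^2-\alpha s^2$ for $r,s\in K$ (this formula holds in every characteristic, being the determinant of multiplication by $r+s\beta$ on $L$). Thus it suffices to prove that, for every $X$, the element $\det(AX+XA)\in K$ lies in the image $N_{L/K}(L)$. I would treat the separable and inseparable cases separately, the dividing line being $\mathrm{char}(K)\neq 2$.

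Suppose first $\mathrm{char}(K)\neq 2$. Then $\alpha\neq 0$, the roots $\pm\beta$ of $x^2-\alpha$ are distinct, and $A$ is diagonalizable over $L$. Since $A^2-\alpha I_n=0_n$, the minimal polynomial of $A$ equals the irreducible $x^2-\alpha$, so the characteristic polynomial, lying in $K[x]$, is $(x^2-\alpha)^{m}$ with $m=n/2$, and $\pm\beta$ each occur with multiplicity $m$. The key point is to diagonalize $A$ by a conjugating matrix whose Galois behaviour is controlled: letting $\tau$ be the nontrivial element of $\mathrm{Gal}(L/K)$ acting entrywise, we have $\tau(A-\beta I_n)=A+\beta I_n$, so $\tau$ carries $\ker(A-\beta I_n)$ onto $\ker(A+\beta I_n)$. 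Choosing a basis $v_1,\dots,v_m$ of the first eigenspace and taking $P=[v_1,\dots,v_m,\tau(v_1),\dots,\tau(v_m)]$ yields $P^{-1}AP=\mathrm{diag}(\beta I_m,-\beta I_m)$ together with $\tau(P)=P\Pi$, where $\Pi=\begin{pmatrix}0&I_m\\I_m&0\end{pmatrix}$.

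Now set $Y=P^{-1}XP$, written in $m\times m$ blocks $Y=\begin{pmatrix}Y_{11}&Y_{12}\\Y_{21}&Y_{22}\end{pmatrix}$. A direct computation gives $P^{-1}(AX+XA)P=\mathrm{diag}(2\beta Y_{11},-2\beta Y_{22})$, whence $\det(AX+XA)=(-4\alpha)^m\det(Y_{11})\det(Y_{22})$. Since $\tau$ is a ring automorphism of $\mathcal{M}_n(L)$, $\tau(P)=P\Pi$, and $X$ has entries in $K$, one finds $\tau(Y)=\Pi Y\Pi$, so $Y_{22}=\tau(Y_{11})$ and therefore $\det(Y_{11})\det(Y_{22})=N_{L/K}(\det Y_{11})$. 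As $(-4\alpha)^m=N_{L/K}\big((2\beta)^m\big)$ (indeed $N_{L/K}(2\beta)=-4\beta^2=-4\alpha$), multiplicativity of the norm gives $\det(AX+XA)=N_{L/K}\big((2\beta)^m\det Y_{11}\big)$, which is of the form $r^2-\alpha s^2$.

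It remains to handle $\mathrm{char}(K)=2$, the one genuinely separate case, since there $\tau$ is trivial (the extension is purely inseparable) and the argument above collapses. Over $L$ one has $(A-\beta I_n)^2=A^2-\beta^2 I_n=\alpha I_n-\alpha I_n=0_n$, so $N:=A-\beta I_n$ satisfies $N^2=0_n$, and in characteristic $2$, $AX+XA=2\beta X+NX+XN=NX+XN$. Applying Proposition \ref{even} to the square-zero matrix $N\in\mathcal{M}_n(L)$ shows that $\det(AX+XA)=\det(NX+XN)$ is a square $w^2$ in $L$; writing $w=r+s\beta$ gives $w^2=r^2+\alpha s^2=r^2-\alpha s^2$, completing the proof. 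The main obstacle is the first, separable case: arranging the Galois bookkeeping so that conjugation by $\tau$ visibly swaps the two diagonal blocks and converts $\det(Y_{11})\det(Y_{22})$ into a single norm.
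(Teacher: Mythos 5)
Your proof is correct, but it takes a genuinely different route from the paper's. The paper never leaves the ground field: since $x^2-\alpha$ is irreducible and annihilates $A$, it is the minimal polynomial of $A$, so by rational canonical form one may assume $A=\mathrm{diag}(U,\dots,U)$ with $U=\begin{pmatrix}0&\alpha\\1&0\end{pmatrix}$; then each $2\times 2$ block $(AX+XA)_{i,j}=UX_{i,j}+X_{i,j}U$ lies in the commutative algebra $K[U]\cong K(\sqrt{\alpha})$, so the determinant can be computed blockwise (the determinant identity for block matrices with pairwise commuting blocks), giving $\det(AX+XA)=\det(rI_2+sU)=r^2-\alpha s^2$. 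That argument is uniform in the characteristic, so no case split is needed. You instead extend scalars to $L=K(\beta)$: in characteristic $\neq 2$ you diagonalize $A$ with a Galois-compatible eigenbasis ($\tau(P)=P\Pi$), observe that $P^{-1}(AX+XA)P=\mathrm{diag}(2\beta Y_{11},-2\beta Y_{22})$ with $Y_{22}=\tau(Y_{11})$, and conclude that the determinant equals $N_{L/K}\bigl((2\beta)^m\det Y_{11}\bigr)$; in characteristic $2$ you fall back on Proposition \ref{even} applied over $L$ to $N=A-\beta I_n$. Your Galois bookkeeping is sound (the relation $\tau(Y)=\Pi Y\Pi$ and the cancellation of the off-diagonal blocks both check out), and both arguments ultimately identify $\det(AX+XA)$ as a value of the norm form of $L/K$. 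What each buys: your version makes the norm interpretation conceptually explicit and needs no block-determinant lemma, at the price of a separable/inseparable case split and scalar extension; the paper's version is shorter, stays over $K$, and handles all characteristics at once, at the price of invoking the commuting-blocks determinant formula.
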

 \begin{proof} Since $\alpha$ is not a square, we may assume that $A=\mathrm{diag}(U_1,\cdots,U_{n/2})$ where, for every $i\leq n/2$, $U_i=U=\begin{pmatrix}0&\alpha\\1&0\end{pmatrix}$. If  $P\in\mathcal{M}_2(K)$, then $UP+PU$ is in the form $aI_2+bU$. Put $X=[X_{i,j}]$ where, for every $i,j\leq{n/2}$, $X_{i,j}\in\mathcal{M}_2(K)$. Note that  $$(AX+XA)_{i,j}=U_iX_{i,j}+X_{i,j}U_j\text{ is in the form }a_{i,j}I_2+b_{i,j}U.$$
 Thus the matrices $(AX+XA)_{i,j}$ pairwise commute and 
 $$\det(AX+XA)=\det(\sum_{\sigma\in S_{n/2}}\epsilon(\sigma)X_{\sigma(1),1}\cdots X_{\sigma(n/2),n/2})=\det(rI_2+sU),$$
 where $r,s\in K$. Finally $\det(AX+XA)=r^2-\alpha s^2$.
 \end{proof}
 \section{When $\det(AX+XA)\geq 0$}
    In the sequel we assume that $K$ is a subfield of $\mathbb{R}$ and we study the following
 
 \medskip
 
  \textbf{Problem.} Find the matrices $A\in\mathcal{M}_n(K)$ such that, for every $X\in\mathcal{M}_n(K)$, $\det(AX+XA)\geq 0$.
  \medskip
  
   Note that  necessarily $\det(A)\geq 0$ (Choose $X=I_n$).\\
\indent The case when $n$ is odd is clear. Indeed (for every $X\in\mathcal{M}_n(K)$, $\det(AX+XA)\geq 0$) is equivalent to (for every $X\in\mathcal{M}_n(K)$, $\det(AX+XA)=0$) (to see that, change $X$ with $-X$). According to Corollary \ref{detnul}, the solutions are the matrices $A$ such that $\mathrm{rank}(A)\leq\dfrac{n-1}{2}$.\\  
 \indent  Now $n$ is assumed to be even. According to Propositions \ref{even}, \ref{twosquares} and Corollary \ref{detnul}, the matrices $A$, such that $A^2=\alpha I_n$, where $\alpha\in K,\alpha\leq 0$, or such that $\mathrm{rank}(A)\leq\dfrac{n-2}{2}$, are particular solutions. Do there exist other solutions ? The answer is no for $n=2$. More precisely, one has
 \begin{prop} \label{two} Let $A\in\mathcal{M}_2(K)$. The following conditions are equivalent\\
 $i)$ For every $X\in\mathcal{M}_2(K)$, $\det(AX+XA)\geq 0$.\\
 $ii)$ $\det(A)\geq 0$ and $\mathrm{tr}(A)=0$.\\
 $iii)$ There exists $\alpha\in K,\alpha\leq 0$, such that $A^2=\alpha I_2$.\\ 
 Moreover, the following conditions are equivalent\\
 $iv)$ For every $X\in\mathcal{M}_2(K)$, $\det(AX+XA)\leq 0$.\\
 $v)$ $A=0_2$.
 \end{prop}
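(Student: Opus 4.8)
The plan is to prove Proposition \ref{two} by analyzing the $2\times 2$ case directly, exploiting the characteristic polynomial and the explicit structure of $AX+XA$.

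**For the equivalence of (i), (ii), (iii):**

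I would start by showing (ii) $\Leftrightarrow$ (iii), which should be the easiest. By Cayley–Hamilton, $A^2-\mathrm{tr}(A)A+\det(A)I_2=0_2$. If $\mathrm{tr}(A)=0$ then $A^2=-\det(A)I_2$, and with $\det(A)\geq 0$ we get $A^2=\alpha I_2$ with $\alpha=-\det(A)\leq 0$. Conversely, if $A^2=\alpha I_2$ with $\alpha\leq 0$, then either $A$ is scalar (forcing $\alpha$ a square $\geq 0$, hence $\alpha=0$ and $A=0_2$) or its minimal polynomial is $x^2-\alpha$, giving $\mathrm{tr}(A)=0$ and $\det(A)=-\alpha\geq 0$.

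**For (iii) $\Rightarrow$ (i):** this is the direction furnished by the earlier propositions. If $A^2=\alpha I_2$ with $\alpha\leq 0$, then when $\alpha=0$ Proposition \ref{even} gives $\det(AX+XA)$ is a square, hence $\geq 0$; when $\alpha<0$, since $K\subset\mathbb{R}$ a negative $\alpha$ is not a square, so Proposition \ref{twosquares} gives $\det(AX+XA)=r^2-\alpha s^2\geq 0$.

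**For (i) $\Rightarrow$ (ii):** this is the substantive direction, and the main obstacle. I would argue contrapositively: assume $\mathrm{tr}(A)\neq 0$ (the condition $\det(A)\geq 0$ being already forced by choosing $X=I_2$, since $\det(2A)=4\det(A)$) and produce an $X$ with $\det(AX+XA)<0$. The natural route is to compute $\phi(X)=\det(AX+XA)$ as an explicit quadratic form in the four entries of $X$ with coefficients depending on the entries of $A$. Writing $A=\begin{pmatrix}a&b\\c&d\end{pmatrix}$, one computes that $AX+XA$ is linear in $X$, so $\phi$ is a homogeneous degree-$2$ polynomial in the entries $x_{k,l}$; the claim becomes that this quadratic form is indefinite precisely when $\mathrm{tr}(A)=a+d\neq 0$ or $\det(A)<0$. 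The cleanest strategy is to exhibit two specific test matrices giving opposite signs: for instance evaluate $\phi$ at $X=I_2$ (yielding $4\det(A)$) and at a suitable off-diagonal or trace-shifting $X$, then show that if $\mathrm{tr}(A)\neq 0$ the sign of $\phi$ genuinely changes. I expect the key computation to be that the discriminant of $\phi$, viewed as a quadratic form on $\mathcal{M}_2(K)$, factors through $\mathrm{tr}(A)$, so that indefiniteness is controlled exactly by whether $\mathrm{tr}(A)$ vanishes.

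**For the equivalence of (iv) and (v):** (v) $\Rightarrow$ (iv) is trivial since $A=0_2$ gives $AX+XA=0_2$. For (iv) $\Rightarrow$ (v), applying the just-established analysis with $-X$ in place of $X$ shows that $\phi\leq 0$ together with $\phi$ being a quadratic form forces $\phi\equiv 0$; by Corollary \ref{detnul} this means $\mathrm{rank}(A)<1$, i.e. $A=0_2$. The main work throughout is the explicit quadratic-form computation in (i) $\Rightarrow$ (ii); everything else reduces to bookkeeping with Cayley–Hamilton and the two preceding propositions.
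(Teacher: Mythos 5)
Your reductions of (ii)$\Leftrightarrow$(iii) (Cayley--Hamilton) and of (iii)$\Rightarrow$(i) (Propositions \ref{even} and \ref{twosquares}, noting that a negative $\alpha$ is not a square in $K\subset\mathbb{R}$) are correct. But the direction (i)$\Rightarrow$(ii), which you yourself identify as the substantive one, is never actually proved: you defer it to a ``key computation'' that you do not perform, and the heuristic you offer in its place is wrong. Write $\psi(X)=AX+XA$, so that $\phi=\det\circ\,\psi$. The discriminant of $\phi$ as a quadratic form is $\det(\psi)^2\,\mathrm{disc}(\det)$, and $\det(\psi)=4\det(A)\,\mathrm{tr}(A)^2$ (the eigenvalues of $\psi$ are $2\lambda_1,2\lambda_2,\lambda_1+\lambda_2,\lambda_1+\lambda_2$); so the discriminant is a nonnegative square times a positive constant. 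Its sign can therefore never certify indefiniteness, and it vanishes both in the ``good'' case $\mathrm{tr}(A)=0$ and in the ``bad'' case $\det(A)=0$, $\mathrm{tr}(A)\neq 0$ --- where $\phi$ \emph{is} indefinite: for $A$ similar to $\mathrm{diag}(0,t)$, $t\neq 0$, one gets $\det(AX+XA)=-t^2x_{1,2}x_{2,1}$. So ``indefiniteness is controlled exactly by whether $\mathrm{tr}(A)$ vanishes'' does not follow from any discriminant computation. What does work along your lines: if $\mathrm{tr}(A)\det(A)\neq 0$ then $\psi$ is invertible, so $\phi$ is congruent to $\det$, which has signature $(2,2)$, hence is indefinite; and the singular case $\det(A)=0$, $\mathrm{tr}(A)\neq 0$ must be handled separately as above. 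The paper avoids all of this: a non-scalar $A$ is nonderogatory, hence similar over $K$ to $\begin{pmatrix}0&\alpha\\1&\beta\end{pmatrix}$, and the single test matrix $X=\begin{pmatrix}a&1\\0&-a\end{pmatrix}$ gives $\det(AX+XA)=1-2a\beta$, forcing $\beta=\mathrm{tr}(A)=0$; a scalar $A=\lambda I_2$ gives $\phi(X)=4\lambda^2\det(X)$, forcing $\lambda=0$.

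Your proof of (iv)$\Rightarrow$(v) fails outright. Since $n=2$, $\phi$ is homogeneous of degree $2$, so $\phi(-X)=\phi(X)$: the sign-flip substitution (which is exactly what the paper uses for \emph{odd} $n$) is vacuous here. Moreover the assertion ``$\phi\leq 0$ together with $\phi$ being a quadratic form forces $\phi\equiv 0$'' is not a valid principle --- negative semidefinite quadratic forms exist, and nothing you have established rules one out a priori --- so you cannot invoke Corollary \ref{detnul}. The correct argument (the paper's terse ``in the same way'') reruns the (i)$\Rightarrow$(ii) computation: for non-scalar $A$ in companion form, the same test matrix with $a=0$ gives $\det(AX+XA)=1>0$, contradicting (iv); for $A=\lambda I_2$, $\phi(X)=4\lambda^2\det(X)$ takes positive values unless $\lambda=0$. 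Hence only $A=0_2$ satisfies (iv).
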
 
 \begin{proof}
 $\bullet$ ($i)\Rightarrow ii)$). If $A$ is a scalar matrix, then clearly $A=0_2$. Else we may assume $A=\begin{pmatrix}0&\alpha\\1&\beta\end{pmatrix}$ where $\det(A)=-\alpha\geq 0$. Let $X=\begin{pmatrix}a&1\\0&-a\end{pmatrix}$. Then, for every $a\in K$, $\det(AX+XA)=1-2a\beta\geq 0$. Thus $\beta=0$ and $\mathrm{tr}(A)=0$.\\
  $\bullet$ ($ii)\Leftrightarrow iii)$). Indeed, $ii)\Leftrightarrow (\text{ there exists }w\in\mathbb{R}\text{ such that }\mathrm{sp}(A)=\{\pm iw\})\Leftrightarrow iii)$, where $\mathrm{sp}(A)$ denotes the list of the complex eigenvalues of $A$. \\
 $\bullet$  In the same way, we show that $iv)\Leftrightarrow v)$.
 \end{proof} 
 \begin{prop}   \label{signum}
 Let $n$ be an even integer and $A\in\mathcal{M}_n(K)$ be a companion matrix such that the function $X\in\mathcal{M}_n(K)\rightarrow\det(AX+XA)$ is always non-negative or always non-positive. Then necessarily $n=2$ and, for every $X\in\mathcal{M}_2(K)$, $\det(AX+XA)\geq 0$. 
 \end{prop}
 \begin{proof}
 Let $X=[x_{i,j}]$ be a strictly upper triangular matrix and assume that $n\geq 4$. Then $\det(AX+XA)=x_{1,2}(x_{1,2}+x_{2,3})\cdots(x_{n-2,n-1}+x_{n-1,n})x_{n-1,n}$. We choose $x_{2,3}=\cdots=x_{n-1,n}=1$. Thus $\mathrm{sgn}(\det(AX+XA))=\mathrm{sgn}(x_{1,2}(x_{1,2}+1))$ can be $-1,\;0\text{ or }1$.\\
  If $n=2$, then $\det(AX+XA)={x_{1,2}}^2\geq 0$.
 \end{proof}
 We show our second main result.
  \begin{thm}  \label{main}        
  Let $n$ be an even integer and $A\in\mathcal{M}_n(K)$. The following conditions are equivalent\\
 $i)$ For every $X\in\mathcal{M}_n(K)$, $\det(AX+XA)\geq 0$.\\
  $ii)$ Either $\mathrm{rank}(A)<\dfrac{n}{2}$ or there exists $\alpha\in K,\alpha\leq 0$, such that $A^2=\alpha I_n$.\\  
 \end{thm}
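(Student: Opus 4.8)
The implication $ii)\Rightarrow i)$ follows at once from the earlier results. If $\mathrm{rank}(A)<n/2$, Corollary \ref{detnul} gives $\det(AX+XA)=0$ for all $X$. If $A^2=\alpha I_n$ with $\alpha=0$, Proposition \ref{even} shows $\det(AX+XA)$ is a square, hence non-negative; if $\alpha<0$ then $\alpha$ is not a square in the real field $K$, so Proposition \ref{twosquares} gives $\det(AX+XA)=r^2-\alpha s^2\ge 0$. Thus the whole content lies in $i)\Rightarrow ii)$.

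First I would note that $i)$ is invariant under $K$-similarity: for $P\in GL_n(K)$ and $B=PAP^{-1}$ one has $\det(BX+XB)=\det\big(A(P^{-1}XP)+(P^{-1}XP)A\big)$, and $X\mapsto P^{-1}XP$ is a bijection of $\mathcal{M}_n(K)$, so the two functions have the same range. Hence I may place $A$ in rational canonical form $A=\mathrm{diag}(C_1,\dots,C_k)$, where $C_i$ is the companion matrix of the invariant factor $p_i$ and $p_1\mid\cdots\mid p_k$. The elementary but crucial observation is that for block-diagonal $X=\mathrm{diag}(X_1,\dots,X_k)$,
$$\det(AX+XA)=\prod_{i}\det(C_iX_i+X_iC_i),$$
so as soon as one block $C_j$ satisfies $\det(C_jX_j+X_jC_j)\not\equiv 0$ (equivalently $\mathrm{rank}(C_j)\ge\deg(p_j)/2$, by Corollary \ref{detnul}), every other block $C_i$ must have $X_i\mapsto\det(C_iX_i+X_iC_i)$ of constant sign. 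Since $\mathrm{rank}(A)=\sum_i\mathrm{rank}(C_i)\ge n/2=\sum_i\deg(p_i)/2$, at least one such heavy block exists.

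Next I would feed each constant-sign block into Proposition \ref{signum}. An even-sized companion block of constant sign is forced to be $2\times2$ with non-negative determinant, so $C_i^2=\alpha_iI_2$, $\alpha_i\le 0$, by Proposition \ref{two}; an odd-sized companion block of constant sign satisfies $\det(C_i(-X)+(-X)C_i)=-\det(C_iX+XC_i)$, hence has $\det\equiv 0$, which is impossible for a companion matrix of odd size $\ge 3$ (its rank is at least size $-1$) and leaves only the $1\times1$ zero block. If $A$ is itself a single companion matrix this already settles everything through Propositions \ref{signum} and \ref{two}. If instead at least two blocks are heavy, then every block has a heavy neighbour and is thus a companion of one of $x$, $x^2$, or $x^2-\alpha$ ($\alpha<0$); the divisibility chain $p_1\mid\cdots\mid p_k$ now does the work of forcing a \emph{single} $\alpha$, since two distinct irreducible quadratics cannot lie in one divisibility chain and $x\nmid x^2-\alpha$ for $\alpha\ne 0$. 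One therefore ends with either all factors equal to a fixed $x^2-\alpha$, giving $A^2=\alpha I_n$, or all factors in $\{x,x^2\}$, giving a nilpotent $A$ with $A^2=0$. (A direct sum of blocks with \emph{different} quadratic spectra is cyclic, hence a single companion block, and so is already excluded by Proposition \ref{signum}.)

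\textbf{The main obstacle} is the remaining configuration, where exactly one block is heavy. Then the constrained light blocks must all be the $1\times1$ zero block, so $A=\mathrm{diag}(C_1,0_q)$ with $q\ge 1$ and $C_1$ a \emph{singular} companion matrix of size $m\ge q+2\ge 3$. Here $A^2=\alpha I_n$ is impossible --- it would force $\alpha=0$ and hence $q=0$ --- so the task reverses: I must contradict $i)$ by producing an $X$ with $\det(AX+XA)<0$. The block-diagonal trick is powerless, because the companion block has no heavy neighbour, so genuinely off-diagonal test matrices are needed. Writing $X=\begin{pmatrix}P&Q\\R&S\end{pmatrix}$ gives
$$AX+XA=\begin{pmatrix}C_1P+PC_1&C_1Q\\RC_1&0_q\end{pmatrix},$$
and the plan is to exploit the coupling of the nontrivial block against its kernel through the maps $Q\mapsto C_1Q$ and $R\mapsto RC_1$, which renders the resulting determinant an indefinite function of the off-diagonal entries. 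I expect this explicit construction --- carried out uniformly for every singular companion block against a nonzero kernel --- to be the one genuinely computational part of the argument, the rest being the structural reduction above.
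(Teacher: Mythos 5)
Your treatment of $ii)\Rightarrow i)$ is correct and matches the paper. The fatal problem is in $i)\Rightarrow ii)$, in the structural claim on which your whole case analysis rests: you assert that as soon as one block $C_j$ is heavy, \emph{every} other block $C_i$ must have a constant-sign function. That deduction from the product formula requires all blocks other than $C_i$ to achieve nonzero determinants \emph{simultaneously}; one heavy neighbour is not enough, because a single light block among the remaining factors forces the whole product to vanish identically. Concretely, take $A=\mathrm{diag}(0_1,J_2,J_3)$, with invariant factors $x\mid x^2\mid x^3$, $n=6$, $\mathrm{rank}(A)=3=n/2$. Two blocks are heavy, so this falls under your configuration ``at least two blocks are heavy'', yet every block-diagonal $X$ gives $\det(AX+XA)=0$ because of the $1\times 1$ zero block: your argument extracts no constraint at all, and in particular cannot conclude that $J_3$ is a companion of $x$, $x^2$ or $x^2-\alpha$ (it is not). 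Since $A^2$ is not a scalar matrix, the theorem demands an $X$ with $\det(AX+XA)<0$ for this $A$, which your scheme never produces; and this $A$ is not of the form $\mathrm{diag}(C_1,0_q)$ covered by your last case either. So the entire family of configurations with at least one $1\times1$ zero block and at least two heavy blocks is left unhandled.

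The second gap is the one you flag yourself: in the single-heavy-block case you only describe a plan, and that off-diagonal construction is exactly where the real work of the theorem lies. The paper's proof avoids both problems by a different decomposition: instead of the rational canonical form, it writes $A=\mathrm{diag}(U,V)$ over $K$ with $\ker(U)=\ker(U^2)$ and $V$ nilpotent, distributing the $1\times1$ kernel blocks between the two parts so that \emph{both are heavy} (proof of Theorem \ref{florian}, Step 4); every factor in the block-diagonal product is then heavy, so the constant-sign argument is legitimate, and each heavy part is then eliminated by an explicit computation: for $V=\mathrm{diag}(0_p,J_q)$ with $p\le q-2$, a parametrized $Z$ giving $\det(VZ+ZV)=\pm r^{q-p-1}$ (an odd power, hence of both signs) unless $V=J_2$; for $U=\mathrm{diag}(F,0_s)$ with $F$ invertible and $s>0$, a $Y$ built from rows and columns of $F^2$ whose determinant changes sign when one row is negated. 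These two constructions are precisely the computational content missing from your proposal; note that your Frobenius/companion argument is sound exactly where the paper uses it, namely for the invertible part $U$ (its step $iv)$), where no light blocks can occur.
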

 \begin{proof}
 By Propositions \ref{odd} and \ref{twosquares}, $ii)$ implies $i)$. Now on, let $A\in\mathcal{M}_n(K)$ satisfying
 $$ \text{ for every }X\in\mathcal{M}_n(K), \det(AX+XA)\geq 0 \text{ and }\mathrm{rank}(A)\geq n/2.$$
  Using the proof of Theorem \ref{florian}, Step 4, we may assume $A=\mathrm{diag}(U_r,V_s)\in\mathcal{M}_n(K)$ where $\mathrm{rank}(U)\geq r/2$, $\mathrm{ker}(U)=\mathrm{ker}(U^2)$ and $V$ is nilpotent and satisfies $\mathrm{rank}(V)\geq s/2$. According to Theorem \ref{florian}, there exist $Y_0\in GL_r(K)$ such that $\det(UY_0+Y_0U)\not= 0$ and $Z_0\in GL_s(K)$ such that $\det(VZ_0+Z_0V)\not= 0$. Considering $X$ in the form $\mathrm{diag}(Y_0,Z)$ or $\mathrm{diag}(Y,Z_0)$, we deduce that the functions $Y\rightarrow\det(UY+YU)$ and $Z\rightarrow \det(VZ+ZV)$ are both  non-negative or both non-positive and $r,s$ are even. Now on we show that $V^2=0_s$.\\
  $i)$ Assume that $V=\mathrm{diag}(0_p,J_q)$, where $q\geq 2,p\leq q-2$ and $p+q$ is even. Let $r,s\in \mathbb{Q}\subset K$ and consider the matrix $Z=[z_{i,j}]$ defined as follows
  $$\text{for every }i\in \llbracket 1,q-p-1\rrbracket,\;z_{n-i+1,2p+i}=r,$$
  $$\text{for every }i\in \llbracket 1,p+1\rrbracket,\;z_{n-q+p+3-i,i}=s,$$
  $$\text{ for every }i\in \llbracket 1,p\rrbracket,\;z_{p+1-i,p+i}=1,$$
  $$\text{the other }z_{i,j}\text{ being zero.}$$
  If $p\geq 1$ and $s=1$, then $\det(VZ+ZV)=\epsilon(p,q) r^{q-p-1}$ where $\epsilon(p,q)\in\{\pm 1\}$. Since $q-p-1$ is odd, that is contradictory. If $p=0$, $q\geq 4$ is even and $r=1$, then $\det(VZ+ZV)=1+\epsilon(q) s$ where $\epsilon(q)\in\{\pm 1\}$, that is contradictory. Finally $V=J_2$ and the function $Z\rightarrow \det(VZ+ZV)$ is non-negative.\\
  $ii)$ According to the proof of Theorem \ref{florian} Step 3, and $i)$, $V$ is similar over $K$ to $\mathrm{diag}(J_2,\cdots,J_2)$. According to Proposition \ref{even}, these matrices work and we are done. Moreover the function $Y\rightarrow\det(UY+YU)$ is non-negative.\\
  $iii)$ We may assume that $U=\mathrm{diag}(F,0_s)$ where $F\in GL_r(K)$ and $r\geq s$. Let $Y=\begin{pmatrix}F^{-1}&B\\C&D_s\end{pmatrix}$. Then $UY+YU=\begin{pmatrix}2I_r&FB\\CF&0_s\end{pmatrix}$. Moreover, if $s>0$, then $\det(UY+YU)=\epsilon(r,s) 2^{-\tau(r,s)}\det(CF^2B)$ where $\epsilon(r,s)\in\{\pm 1\}$ and $\tau(r,s)$ is a positive integer. There exists $G$, an invertible submatrix of $F^2$ of dimension $s$. To obtain $G$, we extract the lines with indices $l_1,\cdots,l_s$ of $F^2$ and the columns with indices $c_1,\cdots,c_s$ of $F^2$. Let $e_1,\cdots,e_r$ be the canonical basis of $K^r$. We choose $C=[e_{l_1},\cdots,e_{l_s}]^T,D=[e_{c_1},\cdots,e_{c_s}]$. Then $CF^2B=G$. Changing $e_{l_1}$ with $-e_{l_1}$ in $C$, we obtain $C'$. Thus $C'F^2B=G'$ is the matrix obtained from $G$ if we change the first line with its opposite. Therefore $\det(C'F^2B)=-\det(CF^2B)\not=0$, that is contradictory. We conclude that $s=0$ and $U$ is invertible.\\
  $iv)$ 
  Suppose $U\in\mathcal{M}_n(K)$ is invertible and the function $X\rightarrow\det(UX+XU)$ is non-negative. Using Frobenius decomposition over $K$, we may assume that $U=\mathrm{diag}(U_1,\cdots,U_t)$ where the $(U_i)_i$ are companion matrices of polynomials of degree $(r_i)_i$, with coefficients in $K$, with no zero roots. Choosing $X=\mathrm{diag}(\pm I_{r_1},I_{n-r_1})$, we deduce that $r_1$ is even . In the same way, all the $(r_i)_i$ are even. Choosing $X=\mathrm{diag}(Y,I_{n-r_1})$, where $Y\in\mathcal{M}_{r_1}(K)$ and using Proposition \ref{signum}, we obtain that $r_1=2$. In the same way, all the $(U_i)_i$ are $2\times 2$ matrices. According to Proposition \ref{two}, for every $i\leq t$, there exists $\lambda_i< 0$ such that ${U_i}^2=\lambda_i I_2$. It remains to show that, for instance, $\lambda_1=\lambda_2$. Choosing $X=\mathrm{diag}(Y,Z,I_{n-4})$, where $Y,Z\in\mathcal{M}_2(K)$, we may assume $n=4$ and 
 $$A=\mathrm{diag}(\begin{pmatrix}0&u\\1&0\end{pmatrix},\begin{pmatrix}0&v\\1&0\end{pmatrix}),\text{ where }u,v<0.$$
 Let $X=\begin{pmatrix}0&0&1&0\\0&0&0&1\\a&0&0&0\\0&-a+1&0&0\end{pmatrix}$. Then, for every $a\in K$,
  $$\det(AX+XA)=2(u+v)(v+a(u-v)).$$
   Clearly, $\det(AX+XA)$ has a constant signum if and only if $u=v$ and this signum is non-negative. We conclude that there exists $\alpha\in K$, $\alpha<0$ such that $U^2=\alpha I_n$. \\
   $v)$ We reduced the problem to the case $A=\mathrm{diag}(F_1,\cdots,F_r,G_1,\cdots,G_s)$ where $F_i=\begin{pmatrix}0&0\\1&0\end{pmatrix}$ and $G_j=\begin{pmatrix}0&v\\1&0\end{pmatrix}$ with $v<0$. If $r,s>0$ then $\mathrm{diag}(\begin{pmatrix}0&0\\1&0\end{pmatrix},\begin{pmatrix}0&v\\1&0\end{pmatrix})$ must be a solution. As in $iv)$, we choose $X=\begin{pmatrix}0&0&1&0\\0&0&0&1\\a&0&0&0\\0&-a+1&0&0\end{pmatrix}$ and $\det(AX+XA)=2v^2(1-a))$ has not a constant signum. Thus $r=0$ or $s=0$.   
 \end{proof}
 \begin{rem}
 In Theorem \ref{main}, we may drop the hypothesis : $n$ is even. Indeed, if $n$ is odd and $A^2=\alpha I_n$, where $\alpha\leq 0$, then necessarily $\alpha =0$ and consequently, $\mathrm{rank}(A)<n/2$.
 \end{rem}
 \begin{cor}
Let $A\in\mathcal{M}_n(K)$. Then, for every $X$, $\det(AX+XA)\leq 0$ if and only if $\mathrm{rank}(A)<n/2$. 
 \end{cor}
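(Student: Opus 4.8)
The plan is to prove the two implications separately, the forward one being an immediate consequence of Lemma \ref{odd} and the converse splitting according to the parity of $n$.

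The easy direction is that $\mathrm{rank}(A)<n/2$ implies the conclusion: by Lemma \ref{odd}, $\det(AX+XA)=0$ for every $X$, which is in particular $\leq 0$. For the converse I would assume that $\det(AX+XA)\leq 0$ for all $X$ and show $\mathrm{rank}(A)<n/2$. When $n$ is odd this is quick: replacing $X$ by $-X$ gives $\det(A(-X)+(-X)A)=(-1)^n\det(AX+XA)=-\det(AX+XA)$, and since $-X$ is again an arbitrary matrix the hypothesis forces $-\det(AX+XA)\leq 0$, i.e. $\det(AX+XA)\geq 0$. Hence $\det(AX+XA)=0$ for every $X$, and Corollary \ref{detnul} yields $\mathrm{rank}(A)<n/2$.

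The substance is the case $n$ even. Here I would argue by contradiction, assuming $\mathrm{rank}(A)\geq n/2$. Then $X\mapsto\det(AX+XA)$ is everywhere nonpositive, hence of constant sign, and I would re-run the entire reduction carried out in the proof of Theorem \ref{main}. The point is that this reduction never uses the nonnegativity of $\det(AX+XA)$ as such, only that its sign is constant: the Fitting splitting $A=\mathrm{diag}(U,V)$ multiplies $\det$ across blocks, the parities of the block sizes are forced by the factor $(-1)^{\mathrm{size}}$ coming from $Y\mapsto -Y$, the exclusions of the mixed nilpotent blocks come from exhibiting odd powers $r^{q-p-1}$ or affine expressions $1+\epsilon s$ that change sign, and Proposition \ref{signum} is already stated for functions that are ``always nonnegative or always nonpositive.'' All of these steps therefore apply verbatim and force $A^2=\alpha I_n$ for some $\alpha\leq 0$. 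But then Propositions \ref{even} and \ref{twosquares} give $\det(AX+XA)\geq 0$ for all $X$; combined with the standing hypothesis $\leq 0$ this makes $\det(AX+XA)=0$ identically, and Corollary \ref{detnul} returns $\mathrm{rank}(A)<n/2$, contradicting $\mathrm{rank}(A)\geq n/2$.

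The main obstacle is to make sure that no step of Theorem \ref{main}'s reduction secretly uses $\geq 0$ in a way that would break under the hypothesis $\leq 0$. The one genuinely sign-sensitive moment is the treatment of the invertible part, where $A$ is reduced to a direct sum of invertible $2\times 2$ companion blocks $U_i$ and each $X\mapsto\det(U_iX+XU_i)$ is seen to have constant sign. There Proposition \ref{two} is decisive through its equivalence $iv)\Leftrightarrow v)$: an invertible block has $U_i\neq 0$, so $\det(U_iX+XU_i)$ cannot be nonpositive for all $X$; the constant sign is therefore forced to be the nonnegative one, with $U_i^2=\lambda_i I_2$ and $\lambda_i<0$. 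This is precisely what allows the conclusion $A^2=\alpha I_n$, $\alpha\leq 0$, to survive in the nonpositive setting and then collide with it to give $\det(AX+XA)\equiv 0$.
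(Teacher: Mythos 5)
Your proposal is correct and takes essentially the same approach as the paper: the paper's own proof disposes of odd $n$ as ``clear'' (the $X\mapsto -X$ sign flip you spell out) and for even $n$ simply instructs the reader to reread the proof of Theorem \ref{main} under the non-positivity hypothesis, which is precisely the rereading you carry out, including the one genuinely sign-sensitive point where Propositions \ref{signum} and \ref{two} force the non-negative sign on the invertible $2\times 2$ blocks. Your write-up just makes explicit the details that the paper's two-line proof leaves to the reader.
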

 \begin{proof}
 If $n$ is odd, then the result is clear. Assume that $n$ is even, $\mathrm{rank}(A)\geq n/2$ and $X\rightarrow \det(AX+XA)$ is non-positive. If we reread the proof of Theorem \ref{main}, then we conclude easily that there are no such matrices $A$. 
 \end{proof}
     

\bibliographystyle{plain}

\end{document}